\theoremstyle{plain} 
\newtheorem*{theorem}{Theorem}
\newtheorem*{lemma}{Lemma}
\DeclareMathOperator{\mre}{Re} 
\DeclareMathOperator{\Arg}{Arg}
\begin{document} 
\title[The norm of the backward shift on $H^1$]{The norm of the backward shift on $H^1$ is $\frac{2}{\sqrt{3}}$} 
\date{\today} 

\author{Ole Fredrik Brevig} 
\address{Department of Mathematics, University of Oslo, 0851 Oslo, Norway} 
\email{obrevig@math.uio.no}

\author{Kristian Seip} 
\address{Department of Mathematical Sciences, Norwegian University of Science and Technology (NTNU), 7491 Trondheim, Norway} 
\email{kristian.seip@ntnu.no}

\thanks{Research supported in part by Grant 275113 of the Research Council of Norway.}

\dedicatory{Dedicated to Fritz Gesztesy on the occasion of his 70th birthday}

\begin{abstract} 
	We show that the norm of the backward shift operator on $H^1$ is $2/\sqrt{3}$, and we identify the functions for which the norm is attained.
\end{abstract}

\subjclass{Primary 30H10. Secondary 47B38.}

\maketitle

\section{Introduction}
Let $H^p$ denote the Hardy spaces of the unit disc $\mathbb D$ which are studied in depth in the classical monographs of Duren \cite{Duren1970} and Garnett \cite{Garnett2007}. The purpose of this note is to compute the norm of the backward shift operator
\[Bf(z) \coloneq \frac{f(z)-f(0)}{z}\]
acting on $H^1$. 

\begin{theorem}
	We have
	\[\|Bf\|_1 \leq \frac{2}{\sqrt{3}} \|f\|_1\]
	for every $f$ in $H^1$, and equality holds in this bound if and only if
	\[f(z) = C \left(\frac{\sqrt{3}+I(z)}{\sqrt{3}-I(z)}\right)^2,\]
	where $C$ is a constant and $I$ is an inner function satisfying $I(0)=0$.
\end{theorem}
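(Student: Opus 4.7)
Since $|z|=1$ on $\partial \mathbb{D}$, one has $\|Bf\|_1 = \|f-f(0)\|_1$, so the theorem amounts to $\|f-f(0)\|_1 \le (2/\sqrt{3})\|f\|_1$ together with an identification of the extremizers. My plan has two main stages: first I would establish the bound for perfect squares $f=F^2$ with $F\in H^2$ via a single Cauchy--Schwarz estimate, and then I would bootstrap the general case via a polarization identity applied to the canonical factorization of $f$.

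For squares, I would write $F^2-F(0)^2 = (F-F(0))(F+F(0))$ and apply Cauchy--Schwarz in $L^2$. A direct expansion gives $\|F-F(0)\|_2^2 = \|F\|_2^2 - |F(0)|^2$ and $\|F+F(0)\|_2^2 = \|F\|_2^2 + 3|F(0)|^2$, the asymmetry coming from the fact that $F(0)$ contributes both to the mean of $F$ and to the constant being added. Writing $t = |F(0)|^2/\|F\|_2^2 \in [0,1]$, this yields $\|F^2 - F(0)^2\|_1 \le \|F\|_2^2\sqrt{(1-t)(1+3t)}$, and the quadratic $(1-t)(1+3t)$ is bounded by $4/3$ with maximum at $t=1/3$, producing the coefficient $2/\sqrt{3}$.

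To extend to general $f\in H^1$, I would use the factorization $f = I_f h^2$ where $I_f$ is inner and $h$ outer in $H^2$ with $\|h\|_2^2 = \|f\|_1$, together with the polarization identity $4 I_f h^2 = [h(1+I_f)]^2 - [h(1-I_f)]^2$. Setting $U = h(1+I_f)$ and $V=h(1-I_f)$, the triangle inequality and the square case give
\[4\|f-f(0)\|_1 \le \|U^2 - U(0)^2\|_1 + \|V^2 - V(0)^2\|_1 \le \tfrac{2}{\sqrt{3}}(\|U\|_2^2 + \|V\|_2^2),\]
and the pointwise identity $|1+I_f|^2 + |1-I_f|^2 = 4$ on $\partial\mathbb{D}$ (a consequence of $|I_f|=1$ a.e.) gives $\|U\|_2^2 + \|V\|_2^2 = 4\|h\|_2^2 = 4\|f\|_1$, closing the bound.

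The most delicate step will be the equality analysis. Equality in the square-case Cauchy--Schwarz forces $|F-F(0)|/|F+F(0)|$ to be constant, which combined with the requirement $t=1/3$ pins the ratio down to $J/\sqrt{3}$ for an inner $J$ with $J(0)=0$, giving $F = F(0)(\sqrt{3}+J)/(\sqrt{3}-J)$. Tracing equality back through the polarization, I expect that the triangle equality together with the extremal forms for both $U$ and $V$ forces their inner parameters to coincide, so that $U/V$ becomes constant and $I_f$ is a unimodular constant; absorbing this constant into $h$ then puts $f$ in the form $F^2$ of the stated type. The main obstacle is making this last reduction watertight, ruling out exotic equality configurations in the polarization step with both $U$ and $V$ nontrivial; I plan to handle it by comparing the two extremal formulas explicitly and using the alignment condition to match their inner parameters.
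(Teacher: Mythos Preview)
Your inequality argument is correct and takes a genuinely different route from the paper. The paper does not split into the analytic square and a polarization step; instead it works in one stroke with the \emph{pointwise} square root $f^{1/2}(z)=|f(z)|^{1/2}e^{i\Arg f(z)/2}$, which lies in $L^2(\mathbb T)$ but is generally not analytic. The extra ingredient is a subharmonicity lemma: if $f(0)\ge 0$ then $\sqrt{f(0)}\le \mre\int_0^{2\pi} f^{1/2}(e^{i\theta})\,d\theta/2\pi$, with equality iff $f^{1/2}$ is analytic. A single Cauchy--Schwarz on $(f^{1/2}-a)(f^{1/2}+a)$ then gives the bound for all $f$ at once, with no inner--outer factorization or polarization. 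Your argument is arguably more elementary (no subharmonicity lemma), at the price of the additional polarization layer; the paper's is shorter and, as noted next, yields a cleaner equality analysis.

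Your equality plan has two soft spots. First, even in the square case, passing from ``$|F-F(0)|/|F+F(0)|$ constant a.e.'' to ``$(F-F(0))/(F+F(0))=J/\sqrt3$ with $J$ inner'' needs $F+F(0)$ outer; you should record that the Apollonius locus $|w-a|=\tfrac{1}{\sqrt3}|w+a|$ (with $a=F(0)>0$) lies in $\{\mre w>0\}$, so the boundary condition forces $\mre F>0$ on $\mathbb T$, hence in $\mathbb D$, and $F+F(0)$ is outer. (In the paper this is free because $\mre f^{1/2}\ge 0$ by construction.) Second, the polarization equality step is more delicate than you indicate: the inner factor $I_f$ in $f=I_f h^2$ is only determined up to a unimodular constant, and since every extremizer is outer, each one admits gauges with \emph{both} $U$ and $V$ nontrivial. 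So you cannot hope to rule that case out; you must actually prove that the two extremal forms $U=U(0)\tfrac{\sqrt3+J_1}{\sqrt3-J_1}$ and $V=V(0)\tfrac{\sqrt3+J_2}{\sqrt3-J_2}$ together with the triangle alignment $\arg(U^2-U(0)^2)=\arg(-(V^2-V(0)^2))$ force $J_1=J_2$ and $U/V$ constant. This can be done (the alignment makes a meromorphic ratio real-valued on $\mathbb T$, hence constant, and one unwinds), but it is a genuine computation that your proposal leaves open.
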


Here we use the standard terminology that $I$ in $H^1$ is an inner function if $|I(e^{i\theta})|=1$ for almost every $e^{i\theta}$ on $\mathbb{T}$. We refer to \cite{Duren1970} and \cite{Garnett2007} for basic information about inner functions and the few other rudimentary aspects of the theory of $H^p$ required in this note.

The backward shift has been studied in great detail (see e.g.~\cite{CR2000}), but its norm is to the best of our knowledge only known on $H^2$ and $H^\infty$. Before proceeding with the proof of the theorem, we will briefly summarize the state-of-the-art. Let therefore $\|B\|_p$ stand for the norm of the backward shift operator on $H^p$.

It is plain that $\|B\|_2=1$, since 
\[\|Bf\|_2 = \sqrt{\|f\|_2^2 -|f(0)|^2}\]
for every $f$ in $H^2$ by orthogonality. We will make use of orthogonality in a similar manner below. 

For general $1 \leq p \leq \infty$, the estimate $|f(0)| \leq \|f\|_p$ and the triangle inequality provide the upper bound $\|B\|_p \leq 2$. A consequence \cite[Theorem~7.7]{BKS1988} is that $\|B\|_p > 1$ for any $p\neq2$. An elementary direct proof of the same assertion follows from the example in \cite[Lemma~2.3]{BOCS2021}.

The previous best result on $\|B\|_1$ is due to Ferguson \cite[Theorem~2.4]{Ferguson2017}, who established that $\|B\|_1 \leq 1.7047$. Ferguson \cite[Theorem~2.5]{Ferguson2017} also proved that $\|B\|_\infty = 2$, by observing that if $f$ is the conformal automorphism of $\mathbb{D}$ interchanging the origin and the point $w$, then $\|Bf\|_\infty = 1+|w|$. In addition, Ferguson improved the upper bound to $\|B\|_p \leq 2^{|1-2/p|}$ using Riesz--Thorin interpolation in $L^p(\mathbb{T})$.

The problem of determining $\|B\|_p$ has been raised in connection with Toeplitz operators (see \cite[Section~5]{Shargorodsky2021} and \cite[Open Problem~5.4]{KS2023}). We hope this note may inspire further work for $p\neq 1,2,\infty$, including the case $0<p<1$.

\section{Proof}
We define square-roots of $H^1$ functions as follows. We set $f^{1/2}(z)\coloneq 0$ if $z$ is a point at which $f(z)=0$, and otherwise we set
\[ f^{1/2}(z)\coloneq \sqrt{|f(z)|}e^{i \frac{\Arg{f(z)}}{2}}; \] 
here and elsewhere, $\sqrt{a}$ signifies the nonnegative square-root of a nonnegative number $a$, and $\Arg{w}$ is the principal value of the argument of the complex number $w$. We see that $f^{1/2}$ is defined at every point in $\mathbb{D}$ and almost everywhere on $\mathbb{T}$. We will need the following result about such square-roots.

\begin{lemma}
	If $f$ is a function in $H^1$ with $f(0) \geq 0$, then
	\begin{equation} \label{eq:subh} \sqrt{f(0)} \leq \mre{\int_0^{2\pi} f^{1/2}(e^{i\theta})\,\frac{d\theta}{2\pi}}.\end{equation}
	Equality in \eqref{eq:subh} is attained if and only if $f^{1/2}$ is analytic in $\mathbb{D}$. 
\end{lemma}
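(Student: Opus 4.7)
The plan is to deduce the inequality from the subharmonicity of $u(z) := \mre f^{1/2}(z)$ on $\mathbb{D}$, by applying the sub-mean-value inequality at the origin together with the identity $u(0)=\sqrt{f(0)}$, which holds because $f(0)\ge 0$.

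Two facts about $u$ underpin the argument. First, $u\ge 0$ everywhere on $\mathbb{D}$, since $\Arg(f)/2\in(-\pi/2,\pi/2]$. Second, $u$ is continuous on $\mathbb{D}$: the only subtle point is where $f(z)$ is real and non-positive (so $\Arg f$ jumps from $\pi$ to $-\pi$), but there $u=0$ and $u\to 0$ from both sides because $\cos(\pm\pi/2)=0$. For subharmonicity, I would note that on any simply connected subregion of $\mathbb{D}$ avoiding the zeros of $f$, the function $f^{1/2}$ coincides up to sign with an analytic branch $g$ of the square root of $f$; since $u\ge 0$, this gives $u=|\mre g|$ locally, which is subharmonic as the absolute value of a harmonic function. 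At isolated zeros of $f$ we have $u=0$, so the sub-mean-value inequality there is automatic (as $u\ge 0$ nearby), and together with continuity this yields subharmonicity of $u$ on all of $\mathbb{D}$.

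The sub-mean-value inequality at the origin then gives $u(0)\le \int_0^{2\pi} u(re^{i\theta})\,d\theta/(2\pi)$ for every $r<1$. To pass to $r\to 1$, I would use that $|f|^{1/2}$ equals the modulus of the $H^2$ outer function whose square is the outer factor of $f$; consequently $|f_r|^{1/2}\to |f|^{1/2}$ in $L^2(\mathbb{T})$, and hence in $L^1(\mathbb{T})$. Since $u_r\to u$ almost everywhere on $\mathbb{T}$ and $|u_r|\le |f_r|^{1/2}$, Vitali's theorem gives $L^1$-convergence of $u_r$ to $u$, completing the proof of~\eqref{eq:subh}.

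For the equality statement, equality at $z=0$ forces $u$ to coincide with its least harmonic majorant (by the maximum principle applied to the difference of $u$ with the Poisson extension of its boundary values), so $u$ is harmonic on $\mathbb{D}$. If $u\equiv 0$, then $f$ is real and non-positive on $\mathbb{D}$, hence constant and identically zero; otherwise the minimum principle yields $u>0$, so $f$ is nonvanishing and, since $\mathbb{D}$ is simply connected, admits a global analytic square root $g$, which after adjusting sign satisfies $\mre g>0$ and hence equals $f^{1/2}$ on $\mathbb{D}$. The converse is immediate. I expect the main technical obstacles to be the continuity argument at points where $f$ is real and non-positive and the boundary-limit passage via $H^2$-control of $|f|^{1/2}$.
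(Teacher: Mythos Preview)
Your overall strategy coincides with the paper's: establish that $u=\mre f^{1/2}$ is continuous and subharmonic on $\mathbb{D}$, apply the sub-mean-value inequality at the origin, pass to the boundary, and for equality deduce that $u$ is harmonic and hence strictly positive, forcing $f^{1/2}$ to be analytic. Your treatment of continuity and of the equality case is in fact more detailed than the paper's.

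There is one wrinkle in your boundary-limit step. The claim that $|f_r|^{1/2}\to|f|^{1/2}$ in $L^2(\mathbb{T})$ does not follow from the $H^2$ outer function $G$ with $G^2=F$ as you suggest, because $|f_r|^{1/2}=|I_r|^{1/2}\,|G_r|$ differs from $|G_r|$ inside the disc by the inner factor. The conclusion is nevertheless true and is most quickly obtained from the elementary bound $\big||a|^{1/2}-|b|^{1/2}\big|^2\le|a-b|$, which yields $\|\,|f_r|^{1/2}-|f|^{1/2}\|_2^2\le\|f_r-f\|_1\to0$ directly; your Vitali argument then goes through. The paper handles this step without Vitali, using the closely related inequality $|\mre w_1-\mre w_2|\le\sqrt{|w_1^2-w_2^2|}$ for $\mre w_j\ge0$ together with Cauchy--Schwarz to obtain
\[
\int_0^{2\pi}\big|u(re^{i\theta})-\mre f^{1/2}(e^{i\theta})\big|\,\frac{d\theta}{2\pi}\le\sqrt{\|f_r-f\|_1},
\]
which is marginally cleaner but essentially the same idea.
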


\begin{proof}
	We notice that the function 
	\[u(z) \coloneq \mre{f^{1/2}(z)}\]
	is continuous and in fact subharmonic in $\mathbb{D}$. Indeed, $u$ clearly satisfies the sub-mean value property at points $z$ where $u(z)=0$, and at all other points $u$ is locally harmonic. Hence
	\begin{equation} \label{eq:subharm}
		\sqrt{f(0)} = u(0) \leq \int_0^{2\pi} u(r e^{i\theta})\,\frac{d\theta}{2\pi}
	\end{equation}
	for $0<r<1$. Using that $|\mre{w_1}-\mre{w_2}| \leq \sqrt{|w_1^2-w_2^2|}$ for complex numbers $w_1$ and $w_2$ with nonnegative real part in combination with the Cauchy--Schwarz inequality, we find that
	\[\int_0^{2\pi} \big|u(r e^{i\theta})-\mre{f^{1/2}(e^{i\theta})}\big|\,\frac{d\theta}{2\pi} \leq \sqrt{\int_0^{2\pi} \left|f(r e^{i\theta})-f(e^{i\theta})\right|\,\frac{d\theta}{2\pi}}.\]
	Since $f$ is in $H^1$, the right-hand side goes to $0$ as $r \to 1^-$. The asserted inequality follows from this and \eqref{eq:subharm}. 
	
	If equality is attained in \eqref{eq:subh}, then the subharmonicity of $u$ means that equality is attained in \eqref{eq:subharm} for every $0<r<1$. This implies that $u$ is harmonic in $\mathbb{D}$. If $f$ is nontrivial, then $u$ is strictly positive in $\mathbb{D}$. Consequently, $f^{1/2}$ is analytic in $\mathbb{D}$.
\end{proof}

Let $f$ be a function in $H^1$. Then $f = IF$, where $I$ is an inner function and
\begin{equation} \label{eq:outerrep}
	F(z) = \exp\left(\int_0^{2\pi} \frac{e^{i\theta}+z}{e^{i\theta}-z}\,\log|f(e^{i\theta})|\,\frac{d\theta}{2\pi}\right).
\end{equation}
We say that $f$ is outer if $f=F$.

\begin{proof}[Proof of the theorem]
	Let $f$ be a function in $H^1$. We may assume without loss of generality that $f(0)\geq 0$. We set $a \coloneq \sqrt{|f(0)|}$ and note that
	\[f(e^{i\theta})-f(0) = \big(f^{1/2}(e^{i\theta})-a\big)\big(f^{1/2}(e^{i\theta})+a\big)\]
	for almost every $e^{i\theta}$ on $\mathbb{T}$. Setting next
	\[b \coloneq \int_0^{2\pi} f^{1/2}(e^{i\theta})\,\frac{d\theta}{2\pi},\]
	we get 
	\begin{multline*}
		\|Bf\|_1 \leq \sqrt{\|f^{1/2}\|_2^2 - |b|^2 + |a-b|^2}\sqrt{\|f^{1/2}\|_2^2 - |b|^2 + |a+b|^2} \\
		= \sqrt{\big(\|f\|_1+|a|^2\big)^2 - \big(2a\mre{b}\big)^2}
	\end{multline*}
	by the Cauchy--Schwarz-inequality and orthogonality. Using that $a\geq0$ and that $\mre{b} \geq a$ from the lemma, we obtain that
	\[\|Bf\|_1 \leq \|f\|_1 \sqrt{(1+x)^2-4x^2}\]
	for $x = f(0)/\|f\|_1\le 1$. The maximum of the right-hand side is attained for $x=1/3$, which completes the proof of the asserted inequality. 
	
	Suppose next that
	\begin{equation} \label{eq:attained}
		\|Bf\|_1 = \frac{2}{\sqrt{3}}\|f\|_1
	\end{equation}
	for a nontrivial function $f$ in $H^1$. Since plainly $f(0)\neq0$, we may assume without loss of generality that $f(0)>0$. Inspecting the argument above, we see that \eqref{eq:attained} can only hold if $\mre{b}=a$, which by the lemma means that $f^{1/2}$ is analytic. Moreover, we must have attained equality in our application of the Cauchy--Schwarz inequality. This is only possible if there is a constant $\lambda \geq 0$ such that
	\begin{equation} \label{eq:CSattained}
		\big|f^{1/2}(e^{i\theta})-a\big| = \lambda \big|f^{1/2}(e^{i\theta})+a\big|
	\end{equation}
	for almost every $e^{i\theta}$ on $\mathbb{T}$. Since $\mre{f^{1/2}}\geq0$ by definition and since $f(0)>0$ by assumption, we find that $\mre{f^{1/2}(z)}+a \geq a > 0$ for every $z$ in $\mathbb{D}$. This means that $f^{1/2}+a$ is an outer function, so the combination of \eqref{eq:outerrep} and \eqref{eq:CSattained} yields 
	\[f^{1/2}(z) - a = \lambda I(z) \big(f^{1/2}(z)+a\big)\]
	for some inner function $I$. Since the left-hand side vanishes at $z=0$, it is clear that $I(0)=0$. Moreover, since $|f^{1/2}(e^{i\theta}) - a|<|f^{1/2}(e^{i\theta})+a|$ for almost every $e^{i\theta}$ on $\mathbb{T}$, we must have $0<\lambda<1$. Consequently, 
	\[f(z) = f(0) \left(\frac{1+\lambda I(z)}{1-\lambda I(z)}\right)^2.\]
	A direct computation using that $(I^k)_{k\geq0}$ is an orthonormal set in $H^2$ shows that
	\[\frac{f(0)}{\|f\|_1} = \frac{1-\lambda^2}{1+3\lambda^2}.\]
	Since \eqref{eq:attained} is attained, the left-hand side equals $1/3$ which means that $\lambda = 1/\sqrt{3}$.
\end{proof}

\bibliographystyle{amsplain} 
\bibliography{backwardH1}

\end{document}